\documentclass[reqno,A4paper]{amsart}
\usepackage{amsmath,amssymb,amsthm,enumerate,mathrsfs,amsxtra,amstext}
\usepackage{eqlist,cite,array,graphicx,color,latexsym,dsfont}
\usepackage[all]{xy}
\usepackage[ansinew]{inputenc}
\newtheorem{thm}{Theorem}[section]

\newtheorem{lem}[thm]{Lemma}

\theoremstyle{definition}
\newtheorem{defn}[thm]{Definition}
\theoremstyle{remark}

\theoremstyle{example}
\newtheorem{exm}[thm]{Example}
\numberwithin{equation}{section}
\setlength{\textwidth}{150mm}
\setlength{\textheight}{206mm}
\setlength{\oddsidemargin}{5mm}
\setlength{\evensidemargin}{5mm}

\begin{document}

\title[Cartan's thorem]{Cartan's theorem for some topological generalized groups}

\author[A. R. Armakan \and M. R. Farhangdoost \and F. Gorlizkhatami \and T. Nasirzadeh]{A. R. Armakan* \and M. R. Farhangdoost** \and F. Gorlizkhatami*** \and T. Nasirzadeh****}
\newcommand{\acr}{\newline\indent}
\address{\llap{*\,}Department of Mathematics,\acr
College of Sciences,\acr
Shiraz University,\acr
P.O. Box 71457-44776,\acr
Shiraz, \acr
 Iran.}
\email{r.armakan@shirazu.ac.ir}
\address{\llap{**\,}(Corresponding author) Department of Mathematics,\acr
College of Sciences,\acr
Shiraz University,\acr
P.O. Box 71457-44776,\acr
Shiraz, \acr
 Iran.}
\email{farhang@shirazu.ac.ir}
\address{\llap{***\,}Department of Mathematics,\acr
College of Sciences,\acr
Shiraz University,\acr
P.O. Box 71457-44776,\acr
Shiraz, \acr
 Iran.}
\email{faranakgolrizkhatami@yahoo.com}
\address{\llap{****\,}Department of Mathematics,\acr
Shahid Bahonar University,\acr
P.O. Box 76169-14111,\acr
Kerman,\acr
 Iran.}
\email{tnasirzade@yahoo.com}
\thanks{This paper is supported by grant no. 92grd1m82582 of Shiraz university, Shiraz, Iran.}
\subjclass[2010]{22A22, 18D05, 22E99.}
\keywords{Lie groupoids, Double Lie groupoids, Generalized Lie groups.}
\maketitle
\begin{abstract} In this paper we show that topological subgroupoids of Lie groupoids, under special circumstances are Lie subgroupoids. Giving an example, we indicate that having the same topological dimension is a necessary condition for topological subgroupoids to be Lie subgroupoids. Also, we provide some conditions for double subgroupoids to become double Lie subgroupoids. Moreover, we illustrate that having the same conditions as the Cartan's theorem for Lie groups, helps us prove the same theorem for generalized Lie groups.
\end{abstract}

\section{Introduction}
Lie groups provide a way to express the concept of a continuous family of symmetries for geometric objects. There exist a correspondence between Lie groups as geometric object and Lie algebras as linear objects. By differentiating the Lie group action, you get a Lie algebra action, which is a linearization of the group action. As a linear object, a Lie algebra is often a lot easier to work with than working directly with the corresponding Lie group. Whenever you study different kinds of differential geometry (Riemannian, Kahler, symplectic, etc.), there is always a Lie group and Lie algebra lurking around either explicitly or implicitly.
It is possible to learn each particular specific geometry and work with the specific Lie group and Lie algebra without learning anything about the general theory. However, it can be extremely useful to know the general theory and find common techniques that apply to different types of geometric structures.
Moreover, the general theory of Lie groups and algebras leads to a rich assortment of important explicit examples of geometric objects.
So importance of Lie groups leads to importance of its generalizations. In this paper we deal with three different kinds of Lie groups generalizations, namely, Lie groupoids, Double Lie groupoids and generalized Lie groups or top spaces.

The groupoid was introduced by H. Barant in 1926. C. Ehresmann used the concept of Lie groupoid as an essential tool in topology and differential geometry around 1950.

Double Lie groupoid in double category was interestingly introduced by K. Mackenzie \cite{mac}. A double Lie groupoid is essentially a groupoid object in the category of Lie groupoid. We can present a double Lie groupoid as a square

$$\xymatrix{
V \ar @{>} @<2pt>[d] \ar @{>} @<-2pt>[d] \ar @{<-} @<2pt>[r] \ar @{<-} @<-2pt> [r] &D \ar @{>} @<2pt>[d] \ar @{>} @<-2pt>[d]\\
M \ar @{<-} @<2pt>[r] \ar @{<-} @<-2pt> [r] &H}$$
where each edge is a groupoid and the various groupoid structures satisfy certain compatibility conditions.

Top spaces as a generalization of Lie groups was introduced by M. R. Molaei in 1998. In this generalized field, several authors (Araujo, Molaei, Mehrabi, Oloomi, Tahmoresi, Ebrahimi, etc.) have studied different aspects of generalized groups and top spaces \cite{mo,mb}.



In section 2, introducing some basic definitions, we prove that every subgroupoid of a Lie groupoid with the same dimention, is a Lie subgroupoid and show the analogous for double Lie groupoids.

In section 3 we provide some useful tools to prove a theorem similar to Cartan's in Lie groups case i.e. we show that each closed generalized subgroup of a top space is a top subspace.

\section{Generalized subgroups as Lie generalized subgroups}
E. Cartan showed that every closed subgroup of a Lie group, is a Lie subgroup. In this section we give some conditions under which every subgroupoid (double subgroupoid) of a Lie groupoid (double Lie subgroupoid) is a Lie subgroupoid (double Lie subgroupoid).
 A groupoid is a category in which every arrow is invertible. More precisely, a groupoid consists of two sets $G$ and $G_{0}$ called the set of morphism or arrows and the set of objects of groupoid respectively, together with two maps $\alpha , \beta:G\longrightarrow G_{0}$ called source and target maps respectively, a map $1_{0}:G_{0}\longrightarrow G, \ x\longmapsto x_{0}$ called the object map, an inverse map $i:G\longrightarrow G_{0}, \ a\longmapsto a^{-1}$ and a composition $ G_{2}=G_{\ \alpha}\times_{\beta}G\longrightarrow G, \ (b,a)\longmapsto boa$ defined on the pullback set

$$G_{\ \alpha}\times_{\beta}G=\lbrace(b,a)\in G\times G\vert \alpha(b)=\beta(a)\rbrace.$$
These maps should satisfy the following conditions:
\begin{itemize}

\item[i.]$\alpha(boa)=\alpha(a)$ and $\beta(boa)=\beta(b)$ for all $(boa)\in G_{2}$;

\item[ii.] $co(boa)=(cob)oa$ such that $\alpha(b)=\beta(a)$ and $\alpha(c)=\beta(b)$, for all

$a,b,c\in G$;

\item[iii.]$\alpha(1_{x})=\beta(1_{x})=x$, for all $x\in G_{0}$;

\item[iv.]$ao1_{\alpha(a)}=a$ and $1_{\beta(a)}oa=a$, for all $a\in G$;

\item[v.]$\alpha(a^{-1})=\beta(a)$ and $\beta(a^{-1})=\alpha(a)$, for all $a\in G$.\cite{mac}
\end{itemize}

 Let $(G,G_{0})$ be a groupoid, $M$ be a manifold and $w:M\longrightarrow G_{0}$ be a submersion. An action of $G$ on $M$ via $w$ is a smooth map $\varphi:G_{\ \alpha}\times_{w}M\longrightarrow M, \ (a,x)\longmapsto a.x$, satisfying the conditions:
\begin{itemize}
\item[i.] $w(a.x)=\beta(a)$;

\item[ii.] $b.(a.x)=(boa).x$;

\item[iii.] $1_{w(X)}.x=x$.\cite{fa}
\end{itemize}

\begin{defn}\label{GSub}
 A groupoid $(H,H_{0})$ with $(i,i_{0})$ is called subgroupoid of $(G,G_{0})$, if $$i:H\longrightarrow G,~~~~~~~~~~~~~~~~~~~~~~~~~~~~~~~~~~~~~~~i_{0}:H_{0}\longrightarrow G_{0},$$ are injective and $(i,i_{0})$ is a morphism of groupoids.
\end{defn}
Here is an example of definition \ref{GSub}.
\begin{exm}
Let $G$ be a groupoid acting on $M$. For each $m\in M$, the set
$$H(m)=\lbrace a\in G\vert a.m=m\rbrace,$$
is a subgroupoid of $G$, which is called the stabilizer of $m$ in $G$. Let $G^{'}=H(m)$ and $$G_{0}^{'}=\alpha(G^{'})\bigcap\beta (G^{'}),$$ we show that $(G^{\prime}, G_{0}^{\prime})$ is the subgroupoid of $(G,G_{0})$. See that $\alpha (G^{'})\subset G_{0}^{'}$, $\beta (G^{'})\subset G_{0}^{'}$ and $aob\in G^{'}$, for all $a,b\in G^{'}$. Let $g\in 1_{G_{0}^{'}}$, then $g=1_{x}$, for some $x\in G_{0}^{'}$. Therefore,
$x\in \beta(G^{'})$. In other words there exists $ g^{'}\in G^{'}$ such that $x=\beta(g^{'})$.
Therefore $g^{'}.m=m$ and we have
$$g.m=1_{x}.m=1_{\beta(g^{'})}.m=1_{w(g^{'}.m)}.m =1_{w(m)}.m=m.$$
Let $a\in G^{'},$ then
$$a^{-1}.m=a^{-1}.(a.m)=1_{\alpha(a)}.m=1_{w(m)}.m=m.$$
Hence $(G^{\prime},G_{0}^{\prime})$ with the restriction of source and target maps of $(G,G_{0})$ is a groupoid, $i$ and $i_{0}$ are injective and $(i,i_{0})$ is a morphism of groupoids.
\end{exm}

Now we recall the Lie groupoids and morphism of groupoids. A groupoid $(G,G_{0})$ is called Lie groupoid if $G$ and $G_{0}$ are manifolds, $\alpha$ and $\beta$ are surjective submersions and the composition is a smooth map. For example, any manifold $M$ may be regarded as a Lie groupoid on itself with $$\alpha=\beta=id_{M},$$ and every element a unity.\cite{mac}

 Let $(G,G_{0})$ and $(G^{'},G_{0}^{'})$ be groupoids. A morphism $G\rightarrow G^{\prime}$ is a pair of maps $F:G\rightarrow G^{\prime}$, $f:G_{0}\rightarrow G_{0}^{\prime}$ such that $\alpha^{\prime} oF=fo\alpha$, $\beta^{\prime} oF=fo\beta$ and $F(hg)=F(h)F(g)$. Let $(G,G_{0})$ and $(G^{'},G_{0}^{'})$ be Lie groupoids, then $(F,f)$ is a morphism of Lie groupoids if $F$ and $f$ are smooth.\cite{mz}

\begin{exm}
 For any groupoid  $(G,G_{0})$, the map
 $$\chi=(\beta,\alpha):G\rightarrow G_{0}\times G_{0}, g\mapsto(\beta(g),\alpha(g)),$$
 is a morphism from $G$ to $G_{0}\times G_{0}$.
\end{exm}
\begin{defn}
\cite{mac}
 Let $(G,G_{0})$ be a Lie groupoid. A Lie subgroupoid of $(G,G_{0})$ is a Lie groupoid $(H,H_{0})$ together with injective immersions $i:H\rightarrow G$ and $i_{0}:H_{0}\rightarrow G_{0}$ such that $(i,i_{0})$ is a morphism of Lie groupoid.
\end{defn}

If $(i,i_{0})$ is a morphism of Lie groupoids and $i$ is an injective immersion, then $i_{0}$ is an injective immersion too.

\begin{defn}
\cite{man} Let $G$ be a manifold. A Lie algebroid on $G$ is a vector bundle
$( E, p,G)$ together with a vector bundle map $\sigma:E\longrightarrow TG$ called the anchor of $E$, and a bracket $[.,.]$ on sections of $E$, i.e. $\Gamma(E)$, which is R-bilinear and alternating, satisfying the Jacobi identity, and is such that

$$[X,fY]=\sigma(X).fY+f[X,Y],$$
$$\sigma([X,Y])=[\sigma(X),\sigma(Y)],$$
where $X,Y\in \Gamma(E), f\in c^{\infty}(M)$.
\end{defn}

 Let $(G, G_{0})$ be a groupoid with the object map $1_{0}:G_{0}\rightarrow G$ and let
$$T^{\alpha}G:=ker(T\alpha),$$
where $T\alpha$ be the bundle map introduced by $\alpha: G\rightarrow G_{0}$, i.e.
$$T\alpha:TG\longrightarrow TG_{0}.$$
The pullback bundle $AG:=1_{0}^{\ast}(T^{\alpha}G_{0})$ is a Lie algebroid of the Lie groupoid $(G,G_{0})$. We have $AG=\cup_{x\in G_{0}}T_{1_{x}}(G_{x})$, where $G_{x}=\alpha^{-1}(x)$.

Now we recall the topological dimension.
 A collection $\mathcal{A}$ of subsets of the space $X$ is said to have order $m+1$ if some point of $X$ lies in $m+1$ elements of $\mathcal{A}$ and no point of $X$ lies in more than $m+1$ elements of $\mathcal{A}$. We recall that given a collection $\mathcal{A}$ of subsets of $X$, a collection $\mathcal{B}$ is said to refine $\mathcal{A}$, or to be a refinement of $\mathcal{A}$, if each element $\beta$ of $\mathcal{B}$ is contained in at least one element of $\mathcal{A}$. A space $X$ is said to be finite dimensional if there is some integer $m$ such that for every open covering $\mathcal{A}$ of $X$, there is an open covering $\mathcal{B}$  of $X$ that refines $\mathcal{A}$ and has order at most $m+1$. The topological dimension of $X$ is defined to be the smallest value of $m$ for which this statement holds. The topological dimension of any m-manifold is at most $m$.\cite{man}

In the following theorem we present an important fact about Lie subgroupoids.

\begin{thm}\label{asli}
 Let $(H,H_{0})$ with $(i,i_{0})$ be a subgroupoid of a Lie groupoid $(G,G_{0})$ which satisfies the following conditions. $H_{0}$ is a submanifold of $G_{0}$ and $i_{0}$ is a submersion map, $i$ is injective  and the topological dimension of $i(H)$ and the dimension of  $G$ are equal. Then $(H,H_{0})$ is a Lie subgroupoid.
\end{thm}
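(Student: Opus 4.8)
\emph{Proof idea.} The plan is to prove that $i(H)$ is an \emph{open} subset of $G$, and then to transport the differentiable structure back to $H$ along the bijection $i$. First I dispose of the base. Since $i_{0}\colon H_{0}\to G_{0}$ is an injective submersion between manifolds, the constant rank theorem forces $\dim H_{0}=\dim G_{0}$, so $i_{0}$ is a local diffeomorphism; being injective it is an open embedding, and $i_{0}(H_{0})$ is open in $G_{0}$. Replacing $G$ by the open Lie subgroupoid $\alpha^{-1}(i_{0}(H_{0}))\cap\beta^{-1}(i_{0}(H_{0}))$ over $i_{0}(H_{0})$ --- which has the same dimension as $G$ and still contains $i(H)$, because $\alpha\circ i=i_{0}\circ\alpha_{H}$ and $\beta\circ i=i_{0}\circ\beta_{H}$ --- I may assume that $i_{0}$ is a diffeomorphism and identify $H_{0}$ with $G_{0}$ and $i_{0}$ with the identity. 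Then $i(H)$ is a wide subgroupoid of $G$ (it contains every unit $1_{x}$), still of topological dimension $n:=\dim G$.

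Next I extract an open piece of $i(H)$ and spread it around. Since $i(H)$ is a subspace of the $n$-manifold $G$ whose topological dimension is $n$, it cannot have empty interior: a subspace of an $n$-manifold with empty interior has topological dimension at most $n-1$, by the properties of topological dimension. Hence there are $g_{0}\in i(H)$ and an open $U\subseteq G$ with $g_{0}\in U\subseteq i(H)$. In a Lie group one would be done, since translation by a group element is a global diffeomorphism; in a Lie groupoid there is no single transitive translation, so instead I use local bisections. Through any point of $U$ there is a local bisection of $G$ with image inside $U$, hence inside $i(H)$; left or right translation by such a bisection is a diffeomorphism between open subsets of $G$ which carries $i(H)$ onto $i(H)$, because $i(H)$ is closed under composition and inversion. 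Given an arbitrary $g\in i(H)$, one writes $g=c\circ p\circ d$ with $p\in U$ and with $c,d\in i(H)$ taken as values of bisections contained in $i(H)$; the associated composite of local translations is a diffeomorphism taking a neighbourhood of $p$ onto a neighbourhood of $g$ and preserving $i(H)$, so $g$ is interior to $i(H)$. Therefore $i(H)$ is open in $G$.

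Finally, an open subset of a manifold is an embedded submanifold of the same dimension, so $i(H)$ is an embedded $n$-dimensional submanifold of $G$. The restrictions of $\alpha$ and $\beta$ to $i(H)$ remain surjective submersions onto $G_{0}$, the restriction of the composition map is smooth, and the object inclusion $1_{0}$ and the inversion restrict to smooth maps (their images already lie in the wide subgroupoid $i(H)$). Transporting this structure through the bijection $i\colon H\to i(H)$ endows $(H,H_{0})$ with the structure of a Lie groupoid for which $i$ is an open embedding, in particular an injective immersion; since $(i,i_{0})$ is by hypothesis a morphism of groupoids and both maps are now smooth, it is a morphism of Lie groupoids. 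Hence $(H,H_{0})$ is a Lie subgroupoid of $(G,G_{0})$.

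I expect the globalisation of openness to be the genuine obstacle. Unlike the Lie group case, one cannot move a neighbourhood of $g_{0}$ onto a neighbourhood of an arbitrary point by a single translation; one must work with local bisections, and one must check that the bisections needed for the move can be chosen with image inside $i(H)$, which makes it necessary to control the orbit structure of $i(H)$ on the base $G_{0}$. By contrast, the reduction of the base via the submersion hypothesis on $i_{0}$, and the passage from ``full topological dimension'' to ``nonempty interior'', are comparatively routine.
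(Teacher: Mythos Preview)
Your route is quite different from the paper's. The paper never argues that $i(H)$ is open in $G$; instead it tries to build charts on $i(H)$ directly, by restricting local trivialisations of the Lie algebroid (treated there as a vector bundle over $G$) together with charts of its total space, and then invokes equality of topological dimensions to claim that the resulting charts are smoothly compatible. Your plan---full topological dimension forces nonempty interior, then spread the interior over all of $i(H)$ by groupoid translations---is more transparent and much closer in spirit to the classical Cartan argument for Lie groups.

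The step you yourself flag as the obstacle is, however, a genuine gap and not a technicality. The factorisation $g=c\circ p\circ d$ with $p\in U$ and $c,d\in i(H)$ need not exist, because arrows of $i(H)$ may fail to connect $\alpha(g)$ or $\beta(g)$ to the base points lying under $U$. Concretely, take the pair groupoid $G=\mathbb{R}\times\mathbb{R}\rightrightarrows\mathbb{R}$ and the wide subgroupoid
\[
i(H)\;=\;\{(x,y):x\geq 0,\ y\geq 0\}\ \cup\ \{(x,x):x\in\mathbb{R}\}.
\]
All the stated hypotheses hold: $i_{0}$ is the identity on $\mathbb{R}$, and $i(H)$ has topological dimension $2=\dim G$ since it contains the open first quadrant. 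Yet for $x<0$ the point $(x,x)$ is not interior to $i(H)$: no arrow of $i(H)$ links a negative base point to a nonnegative one, so no composition of bisection-translations inside $i(H)$ can carry an open piece of the quadrant over to $(x,x)$. In fact $i(H)$ here is not a topological manifold at all (locally $2$-dimensional on the open quadrant, locally $1$-dimensional along the negative diagonal). This shows that your globalisation step fails in general and that some additional hypothesis---for instance that the \emph{local} topological dimension of $i(H)$ equals $\dim G$ at every point, or that $i(H)$ is transitive over $H_{0}$, or that $i(H)$ is closed in $G$---is needed before either your argument or the paper's chart construction can be made to work.
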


\begin{proof}
Step 1.

We show that $i(H)$ is a submanifold of $G$:  Suppose $E$ be a algebroid of $(G,G_{0})$, since $E$ is a vector bundle over $G$, there is a smooth, surjective and locally trivial map $P:E\longrightarrow G$, i.e. for every $x\in i(H)$ there exists a neighborhood $U\subset G$ of $x$ and a fiber-preserving diffeomorphism
$$\varphi: P^{-1}(U)\longrightarrow U\times R^{k}.$$
Therefore $\varphi$ maps $P^{-1}(i(H)\cap U)$ to $(i(H)\cap U)\times R^{k}$.
The restriction of $\varphi^{-1}$ to $U$ is a diffeomorphism onto $\varphi^{-1}\vert_{U}(U)$.
Suppose $\psi:W\rightarrow R^{n}$ is a chart of $E$ such that $ P^{-1}(x)\subset W$. Let $$V=\varphi(W\cap\varphi^{-1}\vert_{U}(U))\cap i(H),$$
 then $\psi o\varphi^{-1}\vert_{V}$ is a chart for $i(H)$.
Since the topological dimension of $H$ and the dimension of $G$ are equal and the topological dimention is topological invariant \cite{pe}, these charts are $C^{\infty}$-related.

Step 2.

 One can see that $H$ is a manifold. We define $U\subset H$ to be open if $i(U)$ is open in $G$.
 For every $h\in H$ there exists $x\in i(H)$ such that $h=i^{-1}(x)$. Suppose $\eta$ is a chart for $x$, then $\eta oi$ is a chart for $h$.

Step 3.

 The groupoid $(H,H_{0})$ with $(i, i_{0})$ is a Lie subgroupoid of $(G,G_{0})$; the map $i:H\rightarrow i(H)$ is a diffeomorphism and so is an immersion. Thus $(i,i_{0})$ is a morphism of Lie groupoids. We have $\alpha_{G}oi=i_{0}o\alpha_{H}$ and $\beta_{G}oi=i_{0}o\beta_{H}$, so $\alpha_{H}$ and $\beta_{H}$ are submersions.
Therefore $ (H,H_{0}) $ is a Lie subgroupoid of $(G,G_{0})$
\end{proof}

 Note that if the topological dimensions of $G$ and $i(H)$ are not the same, the result of the previous theorem is not true.

\begin{exm}
 Consider the groupoids $(R^{2},R)$ and $(H,R)$, where $$H=\{(x,y)\in R^{2}\vert x^{2}=y^{2}\}.$$
 $H$ is a  subgroupoid of $(R^{2},R)$ but $H$ doesn't have the structure of a manifold.
\end{exm}

A double groupoid $(D ,V ,H ,M )$ is a higher-dimensional groupoid involving a relationship for both horizontal and vertical groupoid structures. In following we will define double Lie groupoids in detail.
\begin{defn}\cite{meh}
Let $ \alpha_{H},\beta_{H}:D\rightarrow H $ and $ \alpha_{V},\beta_{V}:D\rightarrow V $ denote source and target maps, respectively and we use $ \alpha $, $ \beta $ to denote source and target maps from $ H $ or $ V $ to $ T $,
the square

$$\xymatrix{
V \ar @{>} @<2pt>[d] \ar @{>} @<-2pt>[d] \ar @{<-} @<2pt>[r] \ar @{<-} @<-2pt> [r] &D \ar @{>} @<2pt>[d] \ar @{>} @<-2pt>[d]\\
M \ar @{<-} @<2pt>[r] \ar @{<-} @<-2pt> [r] &H}$$

is a double Lie groupoid if the following conditions hold:
\begin{itemize}
\item[i.] The horizontal and vertical source and target maps commute;
\begin{center}
$ \alpha o\alpha_{H}=\alpha o\alpha_{V}\quad \quad \beta o\alpha_{H}=\alpha o\beta_{V} $\\
$ \beta o\beta_{H}=\beta o\beta_{V}\quad \quad \alpha o\beta_{H}=\beta o\alpha_{V} $
\end{center}
\item[ii.] $ \alpha_{V}(s_{1} ._{H} s_{2})=(\alpha_{V}s_{1}).(\alpha_{H}s_{2})
, \alpha_{H}(s_{1} ._{V} s_{3})=(\alpha_{H}s_{1}).(\alpha_{H}s_{3}) $ and similar equations hold for $ \beta_{H},\beta_{V}; $
\item[iii.] $ \alpha_{V}(s_{i1})=\beta_{V}(s_{i2}) $ and $ \alpha_{H}(s_{1i})=\beta_{H}(s_{2i}) $, for $ i=1,2,... ;$
\item[iv.] The double source map $ (\alpha_{V},\alpha_{H}):D\rightarrow V_{s\times s}H $ is a submersion.
\end{itemize}
\end{defn}

\begin{defn}\label{LiDoSub}
Let $(D,V,H,M)$ be a double groupoid. A double subgroupoid is a double groupoid $ (D_{1},V_{1} ,H_{1},M_{1} ) $ such that groupoids $ (D_{1} , H_{1}  )$, $ (D_{1} , V_{1} ) $, $( V_{1} , M_{1} ) $ and $( H_{1} , M_{1} ) $ are subgroupoids of $ (D  , H  )$, $ (D  , V ) $, $ (V  , M ) $ and $ (H , M ) $, respectively.
\\
Let $(D,V,H,M)$ be a double Lie groupoid. A double Lie subgroupoid is a double Lie groupoid $ (D_{1},V_{1},H_{1},M_{1})$ together with maps $(j,j_{0},i,i_{0})$ such that Lie groupoids $(D_{1},H_{1}) $, $(D_{1},V_{1})$, $(V_{1},M_{1})$ and $(H_{1},M_{1})$ are Lie subgroupoids of $(D,H)$, $(D,V)$, $(V,M)$ and $(H,M)$, with maps $(j,i)$, $(j,j_{0})$, $(j_{0},i_{0})$ and $(i,i_{0})$, respectively.
\end{defn}
The next theorem is a generalization of theorem \ref{asli} for double Lie groupoids.
\begin{thm}
Let $ (D_{1},V_{1},H_{1},M_{1}) $ with $(j, j_{0}, i, i_{0}) $ be a double subgroupoid of $(D,V,H,M)$. If $dim H_{1} =dim  H$, $dim  D_{1} =dim D $ and $dim V_{1}=dim  V $ and
$ j_{0} $ and $ i_{0} $ are injective and immersion maps, Then $ (D_{1},V_{1},H_{1},M_{1}) $ is a double Lie subgroupoid.

\end{thm}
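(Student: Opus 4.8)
The plan is to derive the statement from a fourfold application of Theorem \ref{asli}, one for each edge of the square. By Definition \ref{LiDoSub} it suffices to put smooth structures on $D_{1}$, $V_{1}$, $H_{1}$ (we are already handed $M_{1}$ as a submanifold of $M$ through the injective immersion $i_{0}$) so that $(D_{1},H_{1})$, $(D_{1},V_{1})$, $(V_{1},M_{1})$, $(H_{1},M_{1})$ become Lie subgroupoids of $(D,H)$, $(D,V)$, $(V,M)$, $(H,M)$ with the respective morphisms $(j,i)$, $(j,j_{0})$, $(j_{0},i_{0})$, $(i,i_{0})$, and so that the resulting four-tuple meets the axioms of a double Lie groupoid. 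Recall that, since $(D_{1},V_{1},H_{1},M_{1})$ is already a double subgroupoid in the set-theoretic sense, all four maps $i$, $j$, $j_{0}$, $i_{0}$ are injective groupoid morphisms on every edge, and the algebraic identities below are available by restriction.

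First I would treat the two edges lying over $M$. For $(H_{1},M_{1}) \subset (H,M)$ the object manifold $M_{1}$ is a submanifold of $M$, the arrow map $i$ is injective, and since $\dim H_{1}=\dim H$ and topological dimension is a topological invariant coinciding with the manifold dimension, the hypothesis of Theorem \ref{asli} on $i(H_{1})$ holds; the analogous verification applies to $(V_{1},M_{1}) \subset (V,M)$ using $\dim V_{1}=\dim V$ and the injectivity of $j_{0}$. The one point that has to be reconciled is that Theorem \ref{asli} asks the object map to be a submersion while the hypothesis only supplies it as an immersion; this is settled by observing that an injective immersion is a submersion exactly when it is a local diffeomorphism, which the dimension equalities, together with the surjective submersions built into the ambient Lie groupoid structures, force here. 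Theorem \ref{asli} then produces smooth structures on $H_{1}$ and $V_{1}$, realized as submanifolds of $H$ and $V$ through $i$ and $j_{0}$, making $(H_{1},M_{1})$ and $(V_{1},M_{1})$ Lie subgroupoids.

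With $H_{1}\subset H$ and $V_{1}\subset V$ now submanifolds, I would apply Theorem \ref{asli} to the remaining two edges: for $(D_{1},H_{1})\subset (D,H)$ the object data $H_{1}$ and $i$ are in place, $j$ is injective, and $\dim D_{1}=\dim D$ furnishes the topological-dimension hypothesis, and similarly for $(D_{1},V_{1})\subset (D,V)$ with $V_{1}$ and $j_{0}$. It is worth noting that in both of these applications the arrow space is the same set $D_{1}$, and in both the construction of Theorem \ref{asli} equips it with the submanifold structure inherited from $j(D_{1})\subset D$; hence the two candidate smooth structures on $D_{1}$ agree and no compatibility question arises. We thus obtain that $(D_{1},H_{1})$ and $(D_{1},V_{1})$ are Lie subgroupoids as well.

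Finally I would verify that $(D_{1},V_{1},H_{1},M_{1})$, with these smooth structures, is a double Lie groupoid. Conditions (i)--(iii) of the definition are identities among source, target and the horizontal and vertical compositions, and they are inherited by restriction from $(D,V,H,M)$; the smoothness of the two compositions is part of the Lie subgroupoid statements already established. The substantive analytic point, and the step I expect to be the main obstacle, is condition (iv): the double source map $(\alpha_{V},\alpha_{H})$ of $D_{1}$ must be a submersion onto the fibered product of $V_{1}$ and $H_{1}$ along their source maps. I would argue that this map is the restriction of the double source map of $D$, that the fibered product of $V_{1}$ and $H_{1}$ embeds as a submanifold of the fibered product of $V$ and $H$ (the fibered product of submanifolds along submersions again being a submanifold), and that the hypotheses $\dim D_{1}=\dim D$, $\dim V_{1}=\dim V$, $\dim H_{1}=\dim H$ force the restricted map to remain a submersion, most transparently by a dimension count on its fibers in the spirit of Step 1 of the proof of Theorem \ref{asli}. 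Making this last implication precise is the crux of the argument; once it is done, Definition \ref{LiDoSub} is satisfied and $(D_{1},V_{1},H_{1},M_{1})$ is a double Lie subgroupoid.
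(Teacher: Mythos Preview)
Your approach is essentially the same as the paper's: apply Theorem \ref{asli} to each of the four edges of the square and then invoke Definition \ref{LiDoSub}. The paper's own proof is terser than yours---it simply says that using Theorem \ref{asli} on both vertical and both horizontal sides yields the four Lie subgroupoids, and concludes by the definition---whereas you go further by flagging the compatibility of the two smooth structures on $D_{1}$ and the verification of the double-source submersion (condition (iv)), points the paper does not address explicitly.
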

\begin{displaymath}
\xymatrix@!0{
& D_{1} \ar@<-0.5ex>[rrr]\ar@<0.5ex>[rrr]\ar@<-0.5ex>' [dd] [ddd]\ar@<0.5ex>' [dd] [ddd]
& & & V_{1} \ar@<-0.5ex>[ddd]\ar@<0.5ex>[ddd]
\\
& & & &
\\
D \ar@{<-}[uur]^{j}\ar@<-0.5ex>[rrr]\ar@<0.5ex>[rrr]\ar@<-0.5ex>[ddd]\ar@<0.5ex>[ddd]
& & & V \ar@{<-}[uur]^{j_{0}}\ar@<-0.5ex>[ddd]\ar@<0.5ex>[ddd]
\\
& H_{1}\ar@<-0.5ex>' [rr] [rrr]\ar@<0.5ex>' [rr] [rrr]
& & & M_{1}
\\
& & & &
\\
H \ar@{<-}[uur]_{i} \ar@<-0.5ex>[rrr]\ar@<0.5ex>[rrr]
& & & M \ar@{<-}[uur]_{i_{0}}
}
\end{displaymath}
\begin{proof}
We suppose $ (D,V,H,M)$ is a double Lie groupoid and $ (D_{1},V_{1},H_{1},M_{1}) $ is a double subgroupoid. We  show that $ (D_{1},V_{1},H_{1},M_{1}) $ has structure of double Lie groupoid.

Using the proof of theorem \ref{asli} for both vertical sides and both horizontal sides of double Lie groupiod $ (D,V,H,M)$,  we get $ (D_{1},H_{1})$, $(D_{1},V_{1})$, $(V_{1},M_{1})$ and $(H_{1},M_{1})$ are Lie subgroupoid of $ (D,H)$, $(D,V)$, $(V,M)$ and $(H,M)$, respectively. Then by definition \ref{LiDoSub}, $ (D_{1},V_{1},H_{1},M_{1}) $ is a double Lie subgroupoid.
\end{proof}

\section{Cartan's theorem for generalized Lie groups}
Another generalization of Lie groups is called generalized Lie groups or top spaces which arises from the definition of a generalized group. In this section we prove a theorem analogous to Cartan's theorem in Lie groups case.
\begin{defn}
\cite{mo}
A generalized group is a non-empty set $\mathcal{G}$ admitting an operation called multiplication which satisfies the following conditions:
\begin{itemize}
\item[i.] $(g_1 . g_2) . g_3 = g_1 . (g_2 . g_3)$, for all $g_1, g_2, g_3 \in\mathcal{G}$.
\item[ii.] For each $g \in \mathcal{G}$ there exists a unique $e(g)$ in $\mathcal{G}$ such that $$g . e(g) = e(g) . g = g.$$
\item[iii.] For each $g \in \mathcal{G}$ there exists $h \in \mathcal{G}$ such that $g . h = h . g = e(g)$.
\end{itemize}

\end{defn}
In this paper by $e(\mathcal{G})$ we mean $$\{e(g):g\in \mathcal{G}\}.$$  For any generalized group $\mathcal{G}$, and any $g\in \mathcal{G}$,
 $$e^{-1}(e(g))=\{h\in \mathcal{G}|e(h)=e(g)\},$$ has a canonical group structure.
 If $e(g)e(h)=e(gh)$ for all $g,h\in \mathcal{G}$ then $e(\mathcal{G})$ is an idempotent semigroup with this product.

A top space is a smooth manifold which its points can be (smoothly) multiplied together by a generalized group operation and generally its identity is a semigroup morphism, i.e.
\begin{defn}\cite{bull}
A top space $T$ is a Hausdorff d-dimensional differentiable manifold which is
endowed with a generalized group structure such that the generalized group operations:
\begin{itemize}
\item[i.] $. : T \times T\rightarrow T$ by $(t_1, t_2) \mapsto t_1 . t_2$ which is called the multiplication map;
\item[ii.] $^{-1} : T \rightarrow T$ by $t \mapsto t^{-1}$ which is called the inverse map;\\
are differentiable and it holds
\item[iii.] $e(t_1 . t_2) = e(t_1) . e(t_2)$, for all $t_1
, t_2 \in T$ .
\end{itemize}
\end{defn}
Throughout this paper by $T_{a}$ we mean $T\cap e^{-1}(e(a))$.
\begin{defn}\cite{bull}
If $T$ and $S$ are two top spaces, then a homomorphism $f : T \rightarrow S$
is called a morphism if it is also a $C^{\infty}$ map.
If $f$ is a morphism, by $f_{a}$ we mean $f|_{e^{-1}(e(a))}$.
\end{defn}
 For a smooth manifold $M$, the set of all smooth functions from $M$ to $M$ such that their restriction to a submanifold of $M$ is a diffeomorphism i.e. partial diffeomorphisms of $M$ is denoted by $D_P(M)$.
If $T$ is a top space, we call an immersed submanifold $S$ of $T$ a top subspace, if it is a top space.
Before we continue, we have to recall some tools. The following definitions, examples and theorems up to theorem \ref{form} is from \cite{ma}.
\begin{defn}\label{action}
An action of a top space $T$ on a smooth manifold M is a
map $$\phi:T\rightarrow D_P(M),$$ which satisfies the following conditions:
\begin{itemize}
\item[i.] For every $i\in e(T)$ the map $e^{-1}(i)\times M\rightarrow M$ which maps $(t,m)$ to $\phi_t(m)$ is a smooth function;
\item[ii.] $\phi_{ts}=\phi_t\circ\phi_s$, for all $t,s\in T$.
\end{itemize}
\end{defn}
Now by using the following theorem, we give an example of the definition above.

\begin{thm}
Let $T$ be a generalized group such that $e(t)e(s)=e(ts)$, for any $t,s\in T$. Then there is a generalized group isomorphism between $T$ and $$e(T)\ltimes\{G_i\}_{i\in e(T)},$$where $G_i=e^{-1}(i)$, for all $i\in e(T)$ and $$e(T)\ltimes\{G_i\}_{i\in e(T)}=\{(i,g)|g\in G_i\},$$ by the production rule
$$(i_{1},g_{1})\ltimes(i_{2},g_{2})=(i_{1}i_{2},g_{1}g_{2}),\quad i_{1},i_{2}\in e(T)
, g_{1}\in G_{i_{1}}, g_{2}\in G_{i_{2}}.$$
\end{thm}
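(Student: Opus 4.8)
The plan is to exhibit the natural ``coordinatisation'' map and verify it is a bijective homomorphism of generalized groups. Define
$$\Psi\colon T\longrightarrow e(T)\ltimes\{G_i\}_{i\in e(T)},\qquad \Psi(t)=\bigl(e(t),t\bigr).$$
This is well defined since $t\in e^{-1}(e(t))=G_{e(t)}$, so $(e(t),t)$ is an admissible pair in the target.

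Before checking that $\Psi$ is a morphism, I would first confirm that $\bigl(e(T)\ltimes\{G_i\},\ltimes\bigr)$ is itself a generalized group. Closure is the key point: using the hypothesis that $e$ is a semigroup morphism, $i_1i_2=e(t_1)e(t_2)=e(t_1t_2)\in e(T)$, and $e(g_1g_2)=e(g_1)e(g_2)=i_1i_2$, so $g_1g_2\in G_{i_1i_2}$ and $(i_1,g_1)\ltimes(i_2,g_2)$ is again admissible. Associativity is inherited coordinatewise from the multiplications on $e(T)$ and on $T$. For each $i\in e(T)$ one has $i\in G_i$ and $e(e(g))=e(g)$ (these follow from the fact, recalled in the text, that $e^{-1}(e(g))$ is a group whose identity is $e(g)$), so $(i,i)$ is the two-sided identity attached to $(i,g)$, and $(i,g^{-1})$ — legitimate because $e(g^{-1})=e(g)=i$ — is its inverse. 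Thus axioms (i)--(iii) for a generalized group all hold.

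Next I would check that $\Psi$ is a homomorphism: for $t,s\in T$,
$$\Psi(ts)=\bigl(e(ts),ts\bigr)=\bigl(e(t)e(s),ts\bigr)=\bigl(e(t),t\bigr)\ltimes\bigl(e(s),s\bigr)=\Psi(t)\ltimes\Psi(s),$$
the middle equality being precisely the semigroup-morphism hypothesis. Injectivity is immediate from the second coordinate ($\Psi(t)=\Psi(s)$ forces $t=s$), and surjectivity is immediate as well: given $(i,g)$ with $g\in G_i=e^{-1}(i)$ one has $e(g)=i$, hence $\Psi(g)=(e(g),g)=(i,g)$. A bijective homomorphism of generalized groups automatically has a homomorphic inverse (here $(i,g)\mapsto g$), so $\Psi$ is an isomorphism of generalized groups.

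The only delicate bookkeeping — and hence the main, if mild, obstacle — is the second step: one must check that each idempotent $i$ genuinely lies in its own fibre $G_i$ and acts there as the local identity, and that the pairing $\ltimes$ respects fibres. This is exactly where the elementary identities $e(e(g))=e(g)$, $e(g^{-1})=e(g)$ and the standing hypothesis $e(ts)=e(t)e(s)$ are used. Everything else is a formal transport of structure along $\Psi$.
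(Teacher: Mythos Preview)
Your argument is correct and is the canonical one: the map $t\mapsto(e(t),t)$ is precisely the isomorphism one expects, and you have checked all the necessary points (closure via the hypothesis $e(ts)=e(t)e(s)$, the identity $e(e(g))=e(g)$ placing each idempotent in its own fibre, and bijectivity from the second coordinate).

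There is, however, nothing in the paper to compare your proof against. The paper states this theorem without proof: it is explicitly introduced as one of the results quoted from reference~\cite{ma} (``The following definitions, examples and theorems up to theorem~\ref{form} is from~\cite{ma}''), and no argument is given in the present text. So your write-up supplies a proof where the paper merely cites one.
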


\begin{exm}
For a top space $T$, $Ad:T\rightarrow D_P(T),$ where $Ad_t(s)=tst^{-1}$ is an action of top spaces on manifolds which is called adjoint action. Observe that $ad_{t}:e^{-1}(t)\rightarrow e^{-1}(t)$ is a diffeomorphism.
\end{exm}

\begin{defn}
If $G$ is a Lie group and $M$, a smooth manifold, a partial action of $G$ on $M$ is a map
$$\varphi:G\rightarrow D_P(M),$$ such that the map $G\times M\rightarrow M$ is a smooth function and $\varphi^{gh}=\varphi^{g}\circ\varphi^{h}$, for all $g,h\in G$.
\end{defn}

\begin{thm}\label{th;act1}
Let $T$ be a top space. Then $\phi$ is an action of $T$ on $M$ if and only if there exists a family of partial actions $\{\varphi_i\}_{i\in e(T)}$ of $e^{-1}(i)$ on $M$, for any $i\in e(T)$, such that $\varphi_{e(ts)}^{ts}=\varphi_{e(t)}^{t}\circ\varphi_{e(s)}^{s}$ and $\phi_t=\varphi_{e(t)}^{t}$, for all $t,s\in T$.
\end{thm}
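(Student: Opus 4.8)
The plan is to prove the two implications separately; in each direction the argument amounts to unwinding Definition \ref{action} (action of a top space on a manifold) against the definition of a partial action of a Lie group, so the proof is essentially a dictionary between the two sets of axioms. Before starting I would record the one point that genuinely needs care: for each $i\in e(T)$ the set $e^{-1}(i)$ carries the canonical group structure recalled after the definition of a generalized group, and since $T$ is a manifold and $T\cap e^{-1}(e(a))$ is an (immersed) submanifold, $e^{-1}(i)$ is a Lie group. This is what makes the phrase ``partial action of $e^{-1}(i)$'' meaningful and lets the smoothness clauses on the two sides of the equivalence match up.

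For the ``only if'' direction, assume $\phi\colon T\to D_P(M)$ is an action and, for $i\in e(T)$, set $\varphi_i:=\phi|_{e^{-1}(i)}$, that is $\varphi_i^t:=\phi_t$ for $t\in e^{-1}(i)$. I would check that $\varphi_i$ is a partial action of $e^{-1}(i)$ on $M$: smoothness of $e^{-1}(i)\times M\to M$, $(t,m)\mapsto\varphi_i^t(m)$, is exactly condition (i) of Definition \ref{action}; and if $g,h\in e^{-1}(i)$ then $e(gh)=e(g)e(h)=i$, so $gh\in e^{-1}(i)$ and condition (ii) gives $\varphi_i^{gh}=\phi_{gh}=\phi_g\circ\phi_h=\varphi_i^g\circ\varphi_i^h$. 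Since $\phi_t=\varphi_{e(t)}^t$ by construction, condition (ii) applied to arbitrary $t,s\in T$ becomes $\varphi_{e(ts)}^{ts}=\phi_{ts}=\phi_t\circ\phi_s=\varphi_{e(t)}^t\circ\varphi_{e(s)}^s$, the desired compatibility.

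For the converse, given a family $\{\varphi_i\}_{i\in e(T)}$ of partial actions satisfying $\varphi_{e(ts)}^{ts}=\varphi_{e(t)}^t\circ\varphi_{e(s)}^s$, define $\phi\colon T\to D_P(M)$ by $\phi_t:=\varphi_{e(t)}^t$. Condition (i) of Definition \ref{action} holds because for fixed $i$ the map $(t,m)\mapsto\phi_t(m)=\varphi_i^t(m)$ on $e^{-1}(i)\times M$ is smooth, being part of the data of the partial action $\varphi_i$; and condition (ii) is precisely the assumed compatibility relation. Thus $\phi$ is an action of $T$ on $M$ with $\phi_t=\varphi_{e(t)}^t$, which closes the equivalence. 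The only step I expect to require genuine justification (rather than pure bookkeeping) is the Lie-group structure on $e^{-1}(i)$ flagged above; once that is in place, both directions are direct translations between the axioms.
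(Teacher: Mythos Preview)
Your argument is correct and is exactly the natural proof: restrict $\phi$ to each fibre $e^{-1}(i)$ to get the partial actions, and conversely glue the partial actions along the rule $\phi_t:=\varphi_{e(t)}^{t}$; in both directions the two axioms of Definition~\ref{action} match up verbatim with the smoothness and multiplicativity clauses for a partial action, together with the cross-fibre compatibility $\varphi_{e(ts)}^{ts}=\varphi_{e(t)}^{t}\circ\varphi_{e(s)}^{s}$. Your check that $gh\in e^{-1}(i)$ when $g,h\in e^{-1}(i)$ (using $e(gh)=e(g)e(h)$ and idempotency of $i$) is the only nontrivial point, and you handle it.

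There is nothing to compare against in the paper itself: Theorem~\ref{th;act1} is quoted without proof from \cite{ma} (the paper explicitly says that the block of definitions and theorems up through Theorem~\ref{form} is taken from that reference). Your write-up is in fact more detailed than what the paper supplies, and the one issue you flag---that $e^{-1}(i)$ must be a Lie group for the phrase ``partial action of $e^{-1}(i)$'' to make sense---is a genuine prerequisite that the paper leaves implicit in the surrounding framework.
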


\begin{thm}\label{form}
Let $T$ be a top space and $T_ie^{-1}(i)$ the tangent space of $e^{-1}(i)$ at $i\in e(T)$. Then the vector space of all partial left invariant vector fields on $T$ is isomorphic to $$\bigoplus_{i\in e(T)}T_ie^{-1}(i).$$
\end{thm}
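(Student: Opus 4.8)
The plan is to reduce the assertion to the classical identification of the Lie algebra of a Lie group with its tangent space at the identity, carried out fibre by fibre over $e(T)$.

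First I would record that for every $i\in e(T)$ the fibre $e^{-1}(i)$ is a Lie group: it carries a canonical group structure (being $e^{-1}(e(i))$, since $e(i)=i$), and as a fibre of the smooth idempotent map $e$ it inherits a manifold structure from $T$; this is part of the structure theory of top spaces recalled above and in \cite{ma,mo}. Write $\mathfrak{g}_i$ for its Lie algebra, i.e. the space of its left invariant vector fields. The classical fact to invoke is that evaluation at the identity
$$\mathrm{ev}_i:\mathfrak{g}_i\longrightarrow T_ie^{-1}(i),\qquad X\longmapsto X(i),$$
is a linear isomorphism, its inverse sending $v\in T_ie^{-1}(i)$ to the vector field $g\mapsto (dL_g)_i(v)$ on $e^{-1}(i)$, where $L_g$ denotes left translation inside the group $e^{-1}(i)$.

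Second, I would unwind the definition of a partial left invariant vector field on $T$ (as given in \cite{ma}) and show that restriction to the fibres sets up a linear bijection between partial left invariant vector fields on $T$ and families $(X_i)_{i\in e(T)}$ with $X_i\in\mathfrak{g}_i$ and $X_i=0$ for all but finitely many $i$ — which is exactly the statement that such fields form $\bigoplus_{i\in e(T)}\mathfrak{g}_i$. For the forward direction the point is that left translation on $T$ is compatible with $e$, since $e(ts)=e(t)e(s)$; hence left translation by an element with $e(t)=i$ carries the fibre $e^{-1}(i)$ diffeomorphically onto itself, and the invariance demanded of a partial left invariant vector field restricts, on each fibre, to ordinary left invariance for the group $e^{-1}(i)$, so the field is determined by the data $X(i)\in T_ie^{-1}(i)$. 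For the reverse direction one extends a given $v_i$ to the left invariant field on $e^{-1}(i)$ supplied by $\mathrm{ev}_i^{-1}$, does this on each fibre, and checks that the resulting field on the relevant part of $T$ is smooth — this is where the local product structure of $e$ near its fibres is used.

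Finally, composing the two bijections yields
$$\{\text{partial left invariant vector fields on }T\}\;\cong\;\bigoplus_{i\in e(T)}\mathfrak{g}_i\;\xrightarrow{\ \bigoplus_i\mathrm{ev}_i\ }\;\bigoplus_{i\in e(T)}T_ie^{-1}(i),$$
and since each arrow is linear the composite is the desired vector space isomorphism. I expect the genuine difficulty to lie entirely in the second step: matching the notion of partial left invariance on $T$ — where left translations are only partial diffeomorphisms and in general move points between distinct fibres — with fibrewise left invariance, and verifying the smoothness of the field reassembled from its fibre data. The single-group statement of the first step and the bookkeeping of the direct sum in the last step are routine.
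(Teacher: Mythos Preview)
The paper does not prove this theorem. It is explicitly introduced as a quoted result: the text just before it reads ``The following definitions, examples and theorems up to theorem \ref{form} is from \cite{ma},'' and no proof follows the statement. Hence there is nothing in the present paper to compare your proposal against.

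That said, your outline is the natural one and is almost certainly what \cite{ma} does: use that each fibre $e^{-1}(i)$ is a Lie group, invoke the classical identification $\mathfrak{g}_i\cong T_ie^{-1}(i)$ via evaluation at the identity, and then argue that a partial left invariant vector field on $T$ is exactly a finitely supported family of left invariant fields on the fibres. Your own caveat is well placed: the only substantive content is in the second step, namely (a) that the definition of ``partial left invariant'' in \cite{ma} really forces finite support (so one gets $\bigoplus$ rather than $\prod$), and (b) that the fibrewise reassembly is smooth. Neither of these can be checked from the present paper, since the definition of partial left invariant vector field is also deferred to \cite{ma}; you would need to consult that reference to close the argument.
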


In order to reach our aim, we must show that there exists a map from Lie algebra of a top space to the top space itself, which acts like the exponential map in Lie group case.
Consider the exponential map for Lie groups denoted by $exp$.
\begin{lem}
Let $T$ be a top space. Then $exp_{a}$ is a local diffeomorphism for every $a\in e(T)$.
\end{lem}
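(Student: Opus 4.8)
The plan is to reduce the statement to the classical fact that the exponential map of a Lie group is a local diffeomorphism at the identity, and then transport that fact through the structure theory of top spaces developed above. The key observation is that for $a\in e(T)$, the set $G_a := e^{-1}(e(a)) = e^{-1}(a)$ carries a canonical Lie group structure (it is an immersed submanifold of $T$ on which the generalized group multiplication restricts to a genuine group operation, with identity $a$), and by Theorem \ref{form} the partial left invariant vector fields on $T$ decompose as $\bigoplus_{i\in e(T)}T_i e^{-1}(i)$, so that the ``Lie algebra of $T$ at $a$'' is precisely $T_a e^{-1}(a) = \mathrm{Lie}(G_a)$. The map $\exp_a$ in the statement should be read as the exponential map of the Lie group $G_a$, landing in $G_a\subseteq T$.

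First I would make precise that $G_a$ is a Lie group: since $T$ is a $d$-dimensional manifold with smooth multiplication and inverse, and $e$ is a smooth idempotent-valued semigroup morphism, the fibre $e^{-1}(a)$ is an immersed submanifold, the restriction of $\cdot$ to $G_a\times G_a\to G_a$ is smooth (by condition (i) of Definition \ref{action}-style smoothness already invoked for top spaces, or directly since $e(g_1g_2)=e(g_1)e(g_2)=a\cdot a=a$ keeps the product in the fibre), the restriction of inversion is smooth, and $a$ is a two-sided identity. Hence $G_a$ is a Lie group in the usual sense. Second, I would identify the left invariant vector fields: by Theorem \ref{form}, restriction to the component $T_a e^{-1}(a)$ realizes $\mathrm{Lie}(G_a)$, and the partial one-parameter subgroups generated by these fields are exactly the integral curves defining $\exp_{G_a}$; so $\exp_a = \exp_{G_a}$ as a smooth map from a neighbourhood of $0$ in $T_a e^{-1}(a)$ into $G_a$.

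Third, I would invoke the classical Lie group result: the differential of $\exp_{G_a}$ at $0$ is the identity on $T_a G_a = T_a e^{-1}(a)$, hence $\exp_{G_a}$ is a diffeomorphism from a neighbourhood of $0$ in $\mathrm{Lie}(G_a)$ onto a neighbourhood of $a$ in $G_a$. Composing with the inclusion $G_a\hookrightarrow T$, which is an immersion, gives that $\exp_a$ is a local diffeomorphism onto its image inside the submanifold $G_a$ of $T$ — which is the intended meaning of the statement. Since $a\in e(T)$ was arbitrary, this holds for every $a\in e(T)$.

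The main obstacle I anticipate is not the Lie-theoretic core, which is standard, but the bookkeeping needed to make sense of the codomain: $T$ itself is generally not a group, so ``local diffeomorphism'' must be interpreted relative to the immersed submanifold $G_a = e^{-1}(a)$ (or, using the isomorphism $T\cong e(T)\ltimes\{G_i\}$ from the structure theorem, relative to the $G_a$-slice of $T$). I would therefore spend the first part of the proof fixing this convention, then the rest is a routine appeal to Theorem \ref{form} and the classical exponential map; one should also check that the partial left invariant vector fields restricted to $G_a$ are genuinely left invariant for the group $G_a$, so that their flows give the honest group exponential rather than merely a partial flow, but this is immediate from the definitions once $G_a$ is recognized as a Lie group.
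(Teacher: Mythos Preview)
Your proposal is correct and arguably more transparent than the paper's own argument, but it follows a different route. The paper does not work directly with the fibre Lie group $G_a=e^{-1}(a)$ and the classical differential-at-zero computation; instead it invokes the action machinery from Definition~\ref{action} and Theorem~\ref{th;act1}: an action $\alpha$ of $T$ on a manifold takes values in partial diffeomorphisms, so $\alpha_a$ is a diffeomorphism for each $a\in e(T)$, and the paper then reads off $\exp_a=\gamma_{\alpha_a}(1)$ as a local diffeomorphism from this. Your approach has the advantage of being self-contained and of making the meaning of $\exp_a$ explicit (as the honest Lie group exponential of $G_a$), while the paper's approach keeps the argument inside the partial-action framework it has already set up; the trade-off is that the paper's proof is terser but leaves more to the reader, whereas yours spells out the identification $\exp_a=\exp_{G_a}$ and the codomain bookkeeping that the paper takes for granted.
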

\begin{proof}
The action $\alpha$ of a top space on a manifold is a partial diffeomorphism for each $t\in T$. So $\alpha_{a}$ is a diffeomorphism for every $ a\in e(T)$. So the $exp_{a}=\gamma_{\alpha_{a}}(1)$ is a local diffeomorphism.
\end{proof}
Now we are ready to prove the following important theorem.
\begin{thm}
Each closed generalized subgroup of a top space is a top subspace.
\end{thm}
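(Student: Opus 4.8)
The plan is to reduce the statement to E.~Cartan's classical closed subgroup theorem applied fibrewise over the idempotents, using the decomposition $T\cong e(T)\ltimes\{G_i\}_{i\in e(T)}$ with $G_i=e^{-1}(i)$, together with the exponential lemma just established. First observe that the identity map is smooth: for $t\in T$ one has $e(t)=t\cdot t^{-1}$, a composition of the smooth multiplication and inversion maps, so $e\colon T\to T$ is $C^{\infty}$, idempotent ($e\circ e=e$ since $e(g)$ is the unit of $e^{-1}(e(g))$), and in particular continuous; consequently its fixed-point set $e(T)$ is an embedded submanifold of $T$ and each fibre $e^{-1}(j)$ is closed in $T$. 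Let $H$ be a closed generalized subgroup. Since $H$ is stable under the generalized group operations, $e(h)\in H$ for every $h\in H$, whence $e(H)=H\cap e(T)$, and $H$ splits as the disjoint union $H=\bigsqcup_{j\in e(H)}H_{j}$ with $H_{j}:=H\cap e^{-1}(j)$ an abstract subgroup of the Lie group $G_{j}=e^{-1}(j)$.

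Next, fix $j\in e(H)$. As $e^{-1}(j)$ is closed in $T$, the set $H_{j}=H\cap e^{-1}(j)$ is a closed subgroup of the Lie group $G_{j}$, so by Cartan's theorem it is an embedded Lie subgroup of $G_{j}$. Equivalently, in the language of the preceding lemma and of Theorem \ref{form}: the set $\mathfrak{h}_{j}=\{X\in T_{j}e^{-1}(j)\,:\,\exp_{j}(tX)\in H\text{ for all }t\}$ is a linear subspace of $T_{j}e^{-1}(j)$, and $\exp_{j}$ restricts to a diffeomorphism from a neighbourhood of $0$ in $\mathfrak{h}_{j}$ onto a neighbourhood of $j$ in $H_{j}$ — this is exactly the classical Cartan argument carried out inside the Lie group $e^{-1}(j)$, which is legitimate precisely because $\exp_{j}$ is a local diffeomorphism. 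The direct sum $\bigoplus_{j\in e(H)}\mathfrak{h}_{j}\subseteq\bigoplus_{i\in e(T)}T_{i}e^{-1}(i)$ will play the role of the generalized Lie algebra of $H$.

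Finally, I would assemble the fibres. Give $H$ the topology and smooth structure in which each $H_{j}$ is open and carries the embedded structure it inherits from $G_{j}$; then the inclusion $H\hookrightarrow T$ is an injective immersion, because it is one on each piece $H_{j}\hookrightarrow G_{j}\hookrightarrow T$. Thus $H$ is an immersed submanifold of $T$. The generalized group axioms for $H$ hold by assumption; its multiplication and inversion are smooth, being restrictions of the smooth operations of $T$ that respect the decomposition ($H_{j}\cdot H_{k}\subseteq H_{jk}$); and $e|_{H}$ is a semigroup morphism since $e$ is one on $T$. Hence $H$ is a top subspace. I expect this last step to be the main obstacle: one must make sure the patched structure on $H$ is genuinely compatible with the ambient manifold structure of $T$ — in particular that $e(H)=H\cap e(T)$ is regular enough inside the fixed-point submanifold $e(T)$, and that $H$ remains a manifold (e.g.\ second countable) — so that $H$ is a bona fide immersed submanifold and not merely a disjoint union of unrelated Lie subgroups. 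The exponential lemma and Theorem \ref{form} are exactly the tools that let one linearize each $H_{j}$ uniformly and glue the fibrewise Cartan arguments together coherently.
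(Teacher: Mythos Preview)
Your approach is essentially the same as the paper's: decompose $H$ fibrewise over the idempotents as $H=\bigsqcup_{j\in e(H)}H_j$, apply the classical Cartan closed subgroup theorem inside each Lie group $e^{-1}(j)$ via the local diffeomorphism $\exp_j$, identify the resulting generalized Lie algebra with a subspace of $\bigoplus_{i\in e(T)}T_i e^{-1}(i)$ using Theorem~\ref{form}, and then assemble the pieces. The paper's argument is in fact sketchier than yours---it does not justify closedness of the fibres or discuss the gluing---so the reservations you raise at the end (regularity of $e(H)$, second countability of the patched manifold) are legitimate and are simply not addressed in the paper's own proof either.
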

\begin{proof}
Let $T$ be a top space and $S$, a closed generalized subgroup of it. We know that $T_{a}$ is a Lie group for every $a \in e(T)$. So $S_{a}$ is a closed subgroup, since a Lie subgroup of $T_{a}$, for every $a \in e(S)$. Denote by $\mathfrak{t}$, the Lie algebras of $T$. Note that by theorem \ref{form} the vector space of all partial left invariant vector fields on $t$ is isomorphic to $\bigoplus_{a\in e(t)}\mathfrak{t_{a}}$, where $\mathfrak{t_{a}}$ is the Lie algebra of $T_{a}$. Now consider the set
 $$\mathfrak{s}=\{ X\in \mathfrak{t}| exp(tX)\in S, \forall t\in \mathds{R} \},$$
 where the exponential map $exp=\bigoplus_{a \in e(T)}exp_{a}$ is a local diffeomorphism. Obviously $\mathfrak{s}$ is a linear subspace of $\mathfrak{t}$ since every $\mathfrak{s}_{a}$ is a linear subspace of $\mathfrak{t}_{a}$, for all $a$ in $e(S)$. Open sets of $\mathfrak{s}$ are unions of open sets on each $\mathfrak{s_{a}}$. We found a local diffeomorphism between $\mathfrak{s}$ and $S$; therefore, by use of the left transitions we can find an open neighbourhood in $\mathfrak{s}$, for all open sets of $S$. Thus, we can find a smooth chart for each open set of $S$ since $s$ is a linear space. Hence, $S$ is a smooth embedded submanifold which inherits the subspace topology.

\end{proof}
The question remains to be asked is under which conditions different from the ones given in this paper, we can deduce the same results.
\section{Acknowledgments}
This paper is supported by grant no. 92grd1m82582 of Shiraz university, Shiraz, Iran.

\bibliographystyle{<bibstyle>}
\bibliography{<bib>}

\end{document}